\title{The number of topological types of trees}
\author{Thilo Krill}
\author{Max Pitz}
\address{Universit\"at Hamburg, Department of Mathematics, Bundesstrasse 55 (Geomatikum), 20146 Hamburg, Germany}
\email{\{thilo.krill, max.pitz\}@uni-hamburg.de}
\let\polishlcross=\l
\def\l{\ifmmode\ell\else\polishlcross\fi}
\let\theta=\vartheta
\let\rho=\varrho
\let\phi=\varphi
\def\NN{\mathbb N}
\def\cA{{\mathcal A}}
\def\cC{{\mathcal C}}
\def\cP{{\mathcal P}}
\def\cT{{\mathcal T}}
\newcommand{\parentheses}[1]{{\left( {#1} \right)}}
\newcommand{\p}{\parentheses}
\newcommand{\Set}[1]{{\left\lbrace {#1} \right\rbrace}}
\def\set#1:#2{\Set{{#1} \colon {#2}}}
\renewcommand{\triangleleft}{\trianglelefteq}
\newcommand{\nottriangleleft}{\not\kern-1pt\mathrel{\triangleleft}}
\newcommand{\upcl}[1]{\lfloor #1 \rfloor}
\theoremstyle{plain}
\newtheorem{thm}{Theorem}[section]
\newtheorem*{thmmain}{Theorem}
\newtheorem{cor}[thm]{Corollary}
\newtheorem{lemma}[thm]{Lemma}
\theoremstyle{definition}
\newtheorem{defn}[thm]{Definition}
\DeclareMathOperator{\successor}{succ}
\DeclareMathOperator{\rank}{rank}
\begin{document}

\begin{abstract}
Two graphs are of the same \emph{topological type} if they can be mutually embedded into each other topologically.
We show that there are exactly $\aleph_1$ distinct topological types of countable trees. In general, for any infinite cardinal $\kappa$ there are exactly $\kappa^+$ distinct topological types of trees of size $\kappa$. This solves a problem of van der Holst from 2005.
\end{abstract}

\subjclass[2020]{05C05, 05C63, 06A07}

\vspace*{-1cm}

\maketitle

\section{Introduction}

A graph-theoretic tree $T$ is a \emph{topological minor} of another tree $S$, written $T \leq S$, if some subdivision of $T$ embeds as a subgraph into $S$. Nash-Williams \cite{nash1965well} proved in 1965 the seminal result that the class of graph-theoretic trees is \emph{well-quasi-ordered} under $\leq$, i.e.\ that it is a reflexive and transitive relation without infinite strictly decreasing sequences or infinite antichains.

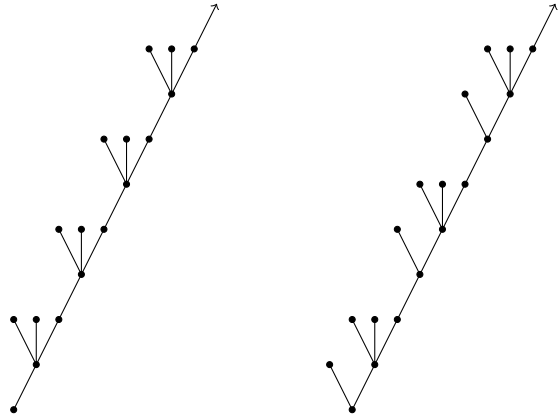
\begin{figure}[h]
\begin{center}
    \begin{tikzpicture}[scale=0.3]
    \foreach \i in {1,...,8}
    {
    \draw[fill=black]  (\i,{2*\i}) circle (.8ex);
    \draw  (\i,{2*\i}) -- (\i+1,{2*\i+2});
    };
    \draw[->]  (9,18) -- (10,20);
     \draw[fill=black]  (9,18) circle (.8ex);
     \foreach \i in {3,5,7,9}
    {
    \draw[fill=black]  ({\i-2},{2*\i}) circle (.8ex);
    \draw ({\i-2},{2*\i}) -- ({\i-1},{2*\i-2});
    };
      \foreach \i in {3,5,7,9}
    {
    \draw[fill=black]  ({\i-1},{2*\i}) circle (.8ex);
    \draw ({\i-1},{2*\i}) -- ({\i-1},{2*\i-2});
    };
    
    \begin{scope}[xshift=15cm]
   \foreach \i in {1,...,8}
    {
    \draw[fill=black]  (\i,{2*\i}) circle (.8ex);
    \draw  (\i,{2*\i}) -- (\i+1,{2*\i+2});
    };
       \draw[fill=black]  (9,18) circle (.8ex);
    \draw[->]  (9,18) -- (10,20);
     \foreach \i in {2,3,5,6,8,9}
    {
    \draw[fill=black]  ({\i-2},{2*\i}) circle (.8ex);
    \draw ({\i-2},{2*\i}) -- ({\i-1},{2*\i-2});
    };
       \foreach \i in {3,6,9}
    {
    \draw[fill=black]  ({\i-1},{2*\i}) circle (.8ex);
    \draw ({\i-1},{2*\i}) -- ({\i-1},{2*\i-2});
    };
    \end{scope}
    \end{tikzpicture}
\end{center}
\label{fig_intro}
\caption{Distinct trees of the same topological type.}
\end{figure}

However, this embedding relation $\leq$ is not anti-symmetric: Two distinct trees $T$ and $S$ may well be topological minors of each other, i.e.\ $T \leq S$ and $S \leq T$. In this case, we say they are of the same \emph{topological type}, written $T \equiv S$.
Describing the hierarchy of graph-theoretic trees under the quasi-ordering  $\leq$ means understanding the partial order that $\leq$ induces on the topological types of trees. By Nash-Williams's theorem, this is a well-partial-order. But determining its most fundamental characteristic, namely its cardinality, has been an open problem until now.

Indeed, while up to isomorphism there are exactly $2^{\aleph_0}$ countable trees, determining the exact number of topological types of countable trees is an open problem posed by van der Holst in 2005 (see \cite{matthiesen2006there}). Building on the trees in Figure~\ref{fig_intro}, we get examples of $2^{\aleph_0}$ non-isomorphic trees of the same topological type.
So a priori, it would have been conceivable that there are only countably many topological types of countable trees. However, Matthiesen \cite{matthiesen2006there} showed in 2006, by an indirect proof building on Nash-Williams's theorem, that there are uncountably many topological types of countable trees. Bruno \cite{bruno2017family} in 2017  gave an explicit construction of uncountably many topological types of subtrees of the binary tree. Recently, Bruno and Szeptycki \cite{bruno2022there} gave the first indication that this bound could be sharp by establishing  that there are exactly $\aleph_1$ many topological types of locally finite trees with only countably many rays.

Our main result confirms this pattern for all trees and all cardinalities:

\begin{thmmain}
    For any infinite cardinal $\kappa$ there are exactly $\kappa^+$ distinct topological types of trees of size $\kappa$. 
\end{thmmain}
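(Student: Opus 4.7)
The plan is to prove matching lower and upper bounds of $\kappa^+$. The lower bound is the easier half: I would exhibit, for each ordinal $\alpha<\kappa^+$, a tree $T_\alpha$ of size $\kappa$ so that the family $\Set{T_\alpha : \alpha<\kappa^+}$ has pairwise distinct topological types. The natural invariant is an ordinal rank: for a rooted tree $T$ with root $r$, define $\rank(T,r)$ recursively as $\sup\Set{\rank(T_c,c)+1 : c \text{ a child of } r}$ in the well-founded case. Topological embeddings of rooted trees preserve $\rank$, so it is an invariant of $\equiv$ on rooted trees. For each $\alpha<\kappa^+$ I would construct a well-founded rooted tree $T_\alpha$ of cardinality exactly $\kappa$ with $\rank(T_\alpha,r_\alpha)=\alpha$, and attach a distinctive ``signature'' subtree at $r_\alpha$ (say, a rigid finite gadget of degree pattern not appearing elsewhere) so that the root can be recovered from the abstract unrooted tree. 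This transports the invariance to $\equiv$ on unrooted trees and shows there are at least $\kappa^+$ topological types.

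For the upper bound, the target is to assign every tree $T$ of size $\kappa$ a canonical invariant $\Psi(T)$ taking values in a set of cardinality $\kappa^+$, with the property that $T\equiv S$ if and only if $\Psi(T)=\Psi(S)$. The invariant should be built by a transfinite recursion on rooted subtrees: fix a root, and for each vertex $v$ let $\Psi_v(T)$ encode the $\equiv$-type of the subtree hanging from $v$, determined by the multiset of invariants $\Psi_c(T)$ over children $c$ of $v$, enriched by data recording the rays issuing from $v$. For well-founded subtrees the recursion stabilises after an ordinal $<\kappa^+$ many steps; Nash-Williams's well-quasi-order theorem then keeps the set of attained invariants controlled at each level. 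The non-well-founded part, consisting of subtrees containing rays, must be handled by a parallel decomposition of the end structure, classified by a companion recursion.

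The main obstacle will be the cardinality count at the upper bound: naively, a multiset of up to $\kappa$ many invariants over a $\kappa$-sized alphabet yields $2^\kappa$ possibilities, so the counting must do much better than the direct estimate. The key structural insight should be that the WPO of rooted-tree topological types of size $\leq\kappa$ admits, at each rank $\alpha<\kappa^+$, only $\leq\kappa$ many pairwise incomparable types (a consequence of well-quasi-ordering), so multiplying across $\kappa^+$ ranks still yields only $\kappa^+$ in total. Equally nontrivial is the reverse direction, namely verifying that trees with the same invariant are actually $\equiv$-equivalent. I would establish this via a back-and-forth construction of mutual topological embeddings, likely strengthened to \emph{injective} topological embeddings (as suggested by the paper's notation $\triangleinj$ and $\equivinj$) to ensure that recursive matches of subtree invariants can be assembled into globally compatible maps $T \le S$ and $S \le T$ rather than clashing on shared branches.
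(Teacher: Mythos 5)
Your proposal has two genuine gaps, both in the upper bound. First, the counting step you yourself flag as the ``main obstacle'' is not resolved by your suggested insight. Well-quasi-ordering does bound antichains, but the difficulty is not the number of pairwise incomparable types at a given rank: it is the number of $\equiv$-classes of \emph{sets} of up to $\kappa$ many subtree types, which is a priori $2^\kappa$ however you stratify by rank, and ``multiplying across $\kappa^+$ ranks'' does not touch this. The paper closes exactly this gap with a dedicated lemma: if $Q$ is \emph{better}-quasi-ordered with $\mu$ many $\equiv$-classes, then $\cP_{\leq\kappa}(Q)$ still has only $\mu$ many $\equiv$-classes for all $\kappa<\aleph_{\mu^+}$ (Lemma~\ref{bqo_sequences_inj} and Corollary~\ref{cor_sequences_inj}). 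Its proof needs the Erd\H{o}s--Dushnik--Miller theorem (a well-quasi-ordered partial order with $\mu^+$ classes contains a $\mu^+$-chain) together with a diagonalisation through $\kappa$-embeddable elements (Lemma~\ref{exceptional_set}); nothing in your sketch supplies an argument of this kind, and better-quasi-ordering (not just wqo) is needed so that $\cP_{\leq\kappa}(\cT_{<\alpha})$ is well-quasi-ordered at all. Relatedly, your plan to build a \emph{complete} invariant $\Psi$ with $T\equiv S$ if and only if $\Psi(T)=\Psi(S)$, verified by a back-and-forth construction of injective embeddings, is both harder than necessary and not what the paper does: Nash-Williams's Lemma~29 (Lemma~\ref{lem_nash-williams} here) already gives $\Theta(T)\leq\Theta(S)\Rightarrow T\leq S$, so one only needs to \emph{count} $\equiv$-classes of the $\Theta$-data per rank; there is no back-and-forth, and no injectivity strengthening of tree embeddings is used ($\equivinj$ never occurs in the paper's argument --- the injectivity lives in Nash-Williams's ordering of $\cP(Q)$, i.e.\ in injections between \emph{sets} of trees).

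Second, your recursion only stabilises on well-founded (rayless) subtrees, and you defer trees containing rays to ``a parallel decomposition of the end structure'' --- but that is precisely the hard case, and as stated this has no content: for a self-similar tree in which $\upcl{t}\equiv T$ for every vertex $t$ (for instance the regular $\kappa$-branching tree), your recursion has nothing to recurse on. The paper's central innovation is a rank designed for exactly this situation: rank $0$ means $\upcl{t}\equiv T$ for \emph{all} $t$, and the recursion descends only into subtrees with $\upcl{t}\not\equiv T$; that every tree of size at most $\kappa$ --- with or without rays --- has rank $<\kappa^+$ (Lemma~\ref{lem_trees_have_ranks}) is then proved from Nash-Williams's theorem by a minimal-counterexample argument. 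Your lower bound is closer to workable: rank monotonicity under order-preserving embeddings is fine for \emph{rooted} trees, but the ``rigid finite gadget'' does not transfer the invariant to unrooted types, since a topological embedding between the unrooted trees is under no obligation to map gadget to gadget or root to root (degrees can increase under such embeddings, and the gadget can embed into the bulk of the tree). This is repairable --- e.g.\ by taking the supremum of the rank over all rootings, which is monotone under unrooted embeddings --- but the paper sidesteps rooting entirely by using Schmidt's rank for unrooted rayless graphs, which is monotone under topological minors (Lemma~\ref{monotony_1}) and realises every ordinal $<\kappa^+$ on trees of size $\kappa$ (Lemma~\ref{lem_minortwinclasses}).
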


Our proof uses a new rank function for trees inspired by Nash-Williams's work on the better-quasi-ordering of infinite trees.

\section{The lower bound}

For common concepts in graph theory and set theory see the textbooks by Diestel \cite{diestel2015book} and Jech \cite{jech2013set}. We write $\kappa^+$ for the successor cardinal of $\kappa$, and $\alpha+1$ for the successor ordinal of $\alpha$.

We recall Schmidt's rank function for rayless graphs \cite{schmidt1983ordnungsbegriff}, see also \cite{halin1998structure} for an English account:
We say that a graph $G$ has \emph{rank} 0 if $G$ is finite. Given an ordinal $\alpha>0$, we assign \emph{rank} $\alpha$ to $G$ if $G$ does not already have a rank $<\alpha$ and there exists a finite set of vertices $X$ in $G$ such that all components of $G - X$ have a rank $<\alpha$.
Schmidt proved that a graph has a rank if and only if it is rayless. Moreover, a routine induction on the rank shows that Schmidt's rank function is non-decreasing with respect to the (topological) minor relation, see e.g.\ \cite[Proposition~4.4]{halin1998structure}:

\begin{lemma}\label{monotony_1}
If a rayless graph $H$ is a (topological) minor of a rayless graph $G$, then the rank of $H$ is at most the rank of $G$. \hfill \qed
\end{lemma}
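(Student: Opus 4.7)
The plan is to proceed by transfinite induction on the rank of $G$, showing that any (topological) minor $H$ of $G$ has rank at most that of $G$.

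For the base case, if $G$ has rank $0$, then $G$ is finite, and any (topological) minor of a finite graph is finite and hence has rank $0$. For the inductive step, suppose $G$ has rank $\alpha > 0$ and the statement holds for all rayless graphs of rank $< \alpha$. Fix a finite set $X \subseteq V(G)$ witnessing the rank of $G$, i.e.\ such that every component of $G - X$ has rank $< \alpha$. I would then construct a finite set $Y \subseteq V(H)$ such that every component of $H - Y$ embeds as a (topological) minor into some component of $G - X$; applying the induction hypothesis then forces each component of $H-Y$ to have rank $< \alpha$, giving $H$ rank at most $\alpha$.

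The construction of $Y$ is where one must be careful, and I expect this to be the main (though mild) obstacle. In the topological minor case, fix a subdivision of $H$ realised in $G$, so that vertices of $H$ correspond to branch vertices in $V(G)$ and edges of $H$ correspond to internally disjoint paths in $G$. Define $Y$ to contain (i) every vertex of $H$ whose corresponding branch vertex lies in $X$, and (ii) for every edge $e = uv$ of $H$ whose subdivision path has an internal vertex in $X$, one endpoint (say $u$). Since the subdivision paths are internally disjoint, each vertex of $X$ is an internal vertex of at most one such path, so $|Y| \le 2|X|$ is finite. In the ordinary minor case, one instead lets $Y$ consist of those $v \in V(H)$ whose branch set $B_v$ meets $X$; disjointness of the branch sets again gives $|Y| \le |X|$.

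To finish, let $C$ be a component of $H - Y$. By construction, no branch vertex of $C$ lies in $X$, and every edge of $C$ corresponds to a subdivision path (or, in the minor case, to a bag-plus-connecting-edge structure) entirely contained in $G - X$. Hence the restricted embedding of $C$ lies in $G - X$ and, since $C$ is connected, in a single component $D$ of $G - X$. Thus $C$ is a (topological) minor of $D$, which by choice of $X$ has rank $< \alpha$, so by the induction hypothesis $C$ has rank $< \alpha$. This shows that $Y$ witnesses a rank of at most $\alpha$ for $H$, completing the induction.
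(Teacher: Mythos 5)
Your proof is correct, and it is precisely the ``routine induction on the rank'' that the paper alludes to: the paper itself gives no proof of this lemma, citing Schmidt's rank function and Halin's account (Proposition~4.4 there) instead. Your transfer of the finite separator $X \subseteq V(G)$ to a finite set $Y \subseteq V(H)$ via branch vertices and subdivision paths (respectively branch sets), followed by the induction hypothesis applied componentwise, is exactly the standard argument being referenced.
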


We can now give the argument for the lower bound in our main theorem:

\begin{lemma}
\label{lem_minortwinclasses}
   For any infinite cardinal $\kappa$ there are at least $\kappa^+$ distinct topological types of (rayless) trees of size $\kappa$. 
\end{lemma}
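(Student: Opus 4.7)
The plan is to use Schmidt's rank function as a topological-type invariant. By Lemma~\ref{monotony_1}, two rayless trees with distinct Schmidt ranks cannot share a topological type, so it suffices to construct, for every ordinal $\alpha$ with $1 \le \alpha < \kappa^+$, a rayless tree $T_\alpha$ of size $\kappa$ whose Schmidt rank is exactly $\alpha$. Since there are $\kappa^+$ such ordinals, this yields $\kappa^+$ distinct topological types of rayless trees of size $\kappa$.

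First I would build, by transfinite recursion on $\alpha \ge 1$, rooted trees $S_\alpha$ of rank exactly $\alpha$: let $S_1$ be the countable star $K_{1,\aleph_0}$ rooted at its center; form $S_{\alpha+1}$ by joining a new root $r$ to the roots of $\aleph_0$ disjoint copies of $S_\alpha$; and for a limit ordinal $\lambda$, form $S_\lambda$ by joining a new root $r$ to the root of one copy of $S_\beta$ for each $\beta<\lambda$. An easy induction gives $|S_\alpha|\le\kappa$ whenever $\alpha<\kappa^+$, and the set $\{r\}$ witnesses $\rank(S_\alpha)\le\alpha$, since by induction each component of $S_\alpha-r$ has rank strictly below $\alpha$. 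For the matching lower bound, suppose for contradiction that $\rank(S_\alpha)=\beta<\alpha$ is witnessed by some finite $X$; in both the successor and limit cases one can pick an ordinal $\gamma$ with $\beta\le\gamma<\alpha$ such that some summand of rank $\gamma$ attached at $r$ is untouched by $X$ (this uses that $X$ is finite while infinitely many summands are attached). That untouched copy lies in a single component of $S_\alpha-X$, forcing that component to have rank $\ge\gamma\ge\beta$ by Lemma~\ref{monotony_1}, a contradiction.

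To arrange size exactly $\kappa$, I would then attach $\kappa$ new leaves at the root of each $S_\alpha$ and call the result $T_\alpha$. Monotonicity gives $\rank(T_\alpha)\ge\rank(S_\alpha)=\alpha$, while augmenting any finite rank-witness for $S_\alpha$ with $\{r\}$ also works for $T_\alpha$ (the $\kappa$ new leaves become isolated vertices of rank $0<\alpha$, using $\alpha\ge 1$). Thus $\rank(T_\alpha)=\alpha$ and $|T_\alpha|=\kappa$, as required.

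The main point to watch out for is the rank lower bound in the recursive construction: it relies crucially on the infinitary branching at each stage, so that no finite cut can ``cap'' the ranks of all surviving components. Everything else is standard bookkeeping.
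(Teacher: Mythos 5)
Your proposal is correct and follows essentially the same approach as the paper: using Schmidt's rank together with Lemma~\ref{monotony_1} as a topological-type invariant, and recursively building rayless trees of every rank below $\kappa^+$ via infinitary branching at a new root, with the same finiteness-of-$X$ argument for the rank lower bound. The only (cosmetic) difference is that the paper starts from the $\kappa$-star so that all trees have size $\kappa$ from the outset, whereas you build countable-branching trees first and pad with $\kappa$ leaves at the root afterwards; both variants are equally valid.
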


\begin{proof}
We show that for all ordinals $0<\alpha<\kappa^+$, there exists a rayless tree $T_\alpha$ of size $\kappa$ and rank $\alpha$.
Then it follows from Lemma~\ref{monotony_1} that all $T_\alpha$ belong to different topological types, establishing the assertion of the lemma.

We construct the $T_\alpha$ by recursion on $\alpha$, beginning with $T_1$ as the $\kappa$-star.
For successor steps, take countably many disjoint copies $T_n$ ($n \in \NN$) of $T_\alpha$ and obtain $T_{\alpha+1}$ by adding a new vertex $v$ to $\bigsqcup_{n \in \NN} T_n$ and connecting it to the root of every $T_n$. Then deleting $\{v\}$ witnesses that $T_{\alpha+1}$ has rank at most $\alpha+1$. On the other hand, every finite set of vertices $X$ of $T_{\alpha+1}$ avoids infinitely many copies of $T_\alpha$, so there are components of $T_{\alpha+1} - X$ containing copies of $T_\alpha$. Every such component has rank at least $\alpha$ by Lemma \ref{monotony_1}, showing that $T_{\alpha+1}$ has rank at least $\alpha+1$.

For limit steps, obtain $T_\ell$  by adding a new vertex $v$ to $\bigsqcup_{\alpha<\ell}T_\alpha$ and connecting it to the root of every $T_\alpha$ for $\alpha < \ell$.
Then deleting $\{v\}$ witnesses that $T_{\ell}$ has rank at most $\ell$; on the other hand, every finite set of vertices $X$ of $T_{\ell}$ avoids almost all $T_\alpha$ copies for $\alpha < \ell$, so $T_{\ell} - X$ contains components of arbitrarily large rank below $\ell$ by Lemma \ref{monotony_1}, showing that $T_{\ell}$ has rank at least $\ell$.
\end{proof}

\section{Better-quasi-orderings}\label{sec_bqo}

A \emph{quasi-ordering} is a binary relation that is reflexive and transitive.
A quasi-ordering $\leq$ on set $Q$ is a \emph{well-quasi-ordering} if for every sequence $q_1,q_2,q_3, \ldots$ of elements in $Q$ there are indices $n < m \in \mathbb{N}$ such that $q_n \leq q_m$.
We define an equivalence relation $\equiv$ on $Q$ by $q\equiv q'$ if both $q\leq q'$ and $q'\leq q$.
We abbreviate $|Q|_\equiv:=|Q/{\equiv}|$.

Let $Q$ be quasi-ordered and $\kappa$ an infinite cardinal.
We say that $q \in Q$ is \emph{$\kappa$-embeddable in $Q$} if there exist at least $\kappa$ many elements $q' \in Q$ with $q \leq q'$.
We need the following routine result, a proof of which can be found e.g.\ in \cite[Lemma~3.3]{bowler2018ubiquity}:

\begin{lemma}
\label{exceptional_set}
For any well-quasi-order $Q$ and infinite cardinal $\kappa$, the number of elements of $Q$ which are not $\kappa$-embeddable in $Q$ is less than $\kappa$. \hfill \qed
\end{lemma}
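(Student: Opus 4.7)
The plan is to proceed by contradiction. Let $N$ denote the set of elements of $Q$ that are not $\kappa$-embeddable in $Q$, and for each $q \in N$ write $U_q := \Set{q' \in Q : q \leq q'}$, so that $\cardinality{U_q} < \kappa$ by the definition of (non-)$\kappa$-embeddability. Suppose for a contradiction that $\cardinality{N} \geq \kappa$; I will then extract an infinite bad sequence from $N$, contradicting the well-quasi-order hypothesis on $Q$.

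I would build the sequence $(q_n)_{n \in \NN}$ by a greedy recursion: at step $n$, having already chosen $q_0, \ldots, q_{n-1} \in N$, pick any $q_n \in N \setminus \bigcup_{k<n} U_{q_k}$. Such a choice is possible because the forbidden set is a \emph{finite} union of sets of cardinality $< \kappa$, so for infinite $\kappa$ it has cardinality $< \kappa \leq \cardinality{N}$. By construction, $q_n \notin U_{q_k}$ for every $k < n$, i.e.\ $q_k \not\leq q_n$, so $(q_n)_{n \in \NN}$ is a bad sequence in $Q$, which is the desired contradiction.

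I do not anticipate any genuine obstacle: the whole argument reduces to a one-line greedy recursion together with the elementary cardinal-arithmetic observation that a finite union of sets of cardinality $< \kappa$ still has cardinality $< \kappa$ whenever $\kappa$ is infinite. The only point requiring a moment of care is orientational: one must work with the up-set $U_q$, since $\kappa$-embeddability is defined via how many elements lie \emph{above} $q$ in the quasi-order, not below.
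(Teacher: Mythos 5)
Your proposal is correct and matches the argument the paper intends: the paper gives no in-line proof, deferring to \cite[Lemma~3.3]{bowler2018ubiquity}, and the proof there is exactly your greedy recursion, choosing $q_n \in N$ outside the finite union $\bigcup_{k<n} U_{q_k}$ (which has size $<\kappa$ since $\kappa$ is infinite and the union is finite) so that $q_k \not\leq q_n$ for all $k<n$, yielding an infinite bad sequence that contradicts the well-quasi-ordering of $Q$. Your cardinality estimate and your orientation of the up-sets are both right, so there is nothing to fix.
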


Let $(Q,\leq_Q)$ be a quasi-order. Following Nash-Williams \cite{nash1965well}, we consider the quasi-ordering on the power set $\cP(Q)$ where for $A,B \subseteq Q$ we have $A \leq B$ if there is an injective function $f \colon A \to B$ such that $a \leq_Q f(a)$ for all $a \in A$.
Recall that $\cP(Q)$ is not necessarily well-quasi-ordered if $Q$ is well-quasi-ordered (see \cite{rado1954partial}).
This is remedied by the introduction of the concept of a \emph{better-quasi-ordering}. We shall not define this concept precisely; we only use as a blackbox that every better-quasi-ordered set is also well-quasi-ordered, that $\cP(Q)$ is better-quasi-ordered if $Q$ is better-quasi-ordered, and that the class of all trees is better-quasi-ordered under the topological minor relation \cite{nash1965well} (also see \cite{laver1978better}, \cite{kuhn2001revisited}).

We write $\cP_\kappa(Q)$ for the set of subsets of $Q$ of size exactly $\kappa$ and $\cP_{\leq \kappa}(Q)$ for the set of subsets of $Q$ of size at most $\kappa$.
Extending an idea from \cite{bruno2022there}, we prove the following result on the number of equivalence classes in $\cP_\kappa(Q)$:

\begin{lemma}
\label{bqo_sequences_inj}
Let $\mu$ be an infinite cardinal and $Q$ a better-quasi-ordered set with $|Q|_\equiv=\mu$.
Then $|\cP_\kappa(Q)|_\equiv=\mu$ for all cardinals $\kappa < \aleph_{\mu^+}$.
\end{lemma}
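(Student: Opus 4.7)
My plan is transfinite induction on $\kappa$. For the lower bound $|\cP_\kappa(Q)|_\equiv \geq \mu$, I would embed $Q/{\equiv}$ into $\cP_\kappa(Q)/{\equiv}$ by a padding construction: fix $\mu$ pairwise inequivalent representatives $\{q_\alpha : \alpha < \mu\}$ in $Q$, and attach to each a common padding $F \subseteq Q$ of size $\kappa$ chosen so that no element of $F$ dominates any $q_\alpha$. Under this condition $q_\alpha \not\leq q_\beta$ forces $\{q_\alpha\} \cup F \not\leq \{q_\beta\} \cup F$, so the padded sets form $\mu$ pairwise inequivalent elements of $\cP_\kappa(Q)$; finding such an $F$ uses $|Q|_\equiv = \mu$ together with the bqo structure of $Q$.

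For the upper bound $|\cP_\kappa(Q)|_\equiv \leq \mu$ -- which is the substantive content -- my main tool is Lemma~\ref{exceptional_set}. Given $A \in \cP_\kappa(Q)$, I apply that lemma to the wqo $A$ (inherited as a subset of the bqo $Q$) to obtain a decomposition $A = A_{\mathrm{high}} \sqcup A_{\mathrm{low}}$, where $A_{\mathrm{low}} := \{a \in A : |\{a' \in A : a \leq a'\}| < \kappa\}$ has size less than $\kappa$. Then $A_{\mathrm{low}}$ lies in $\cP_\lambda(Q)$ for some $\lambda < \kappa$, and by the induction hypothesis it contributes at most $\mu$ distinct $\equiv$-classes. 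The contribution of $A_{\mathrm{high}}$ should be captured by a secondary invariant, roughly the ``cofinal support'' $\mathrm{Cof}(A) := \{[q] \in Q/{\equiv} : A \text{ has } \kappa \text{ elements above } q\}$, a subset of $Q/{\equiv}$ that contributes at most $\mu$ possibilities after a further reduction using the bqo structure of $Q/{\equiv}$.

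The main obstacle will be proving completeness of the invariant $(\mathrm{Cof}(A), [A_{\mathrm{low}}]_\equiv)$, that is, that two sets with the same invariant are $\equiv$-equivalent. The plan is to construct mutual injections by first matching the low parts via their $\equiv$-equivalence and then extending over the high parts, where the $\kappa$-embeddability of every element within the high part yields abundant room to avoid collisions. The subtle point is absorbing any overflow from the low part into the high part of the target set without disturbing the matching; this uses the bqo structure of $Q$ repeatedly and likely nested applications of Lemma~\ref{exceptional_set} at various cardinal thresholds. The restriction $\kappa < \aleph_{\mu^+}$ should be exactly what is needed to keep the running count of $\equiv$-classes bounded by $\mu$ throughout the induction, since it guarantees that the iterative use of Lemma~\ref{exceptional_set} never escapes the range where the exceptional set has size at most $\mu$.
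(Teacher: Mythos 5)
Your upper-bound argument takes a genuinely different route from the paper's. The paper proceeds by contradiction: if $|\cP_\kappa(Q)|_\equiv \geq \mu^+$, then since $\cP_\kappa(Q)$ is well-quasi-ordered (Nash-Williams) the Erd\H{o}s--Dushnik--Miller theorem yields a strictly increasing well-ordered chain $(A_k \colon k < \mu^+)$; applying Lemma~\ref{exceptional_set} to each $A_k$, normalising $|X_k|$ and the $\equiv$-class of $X_k$ using regularity of $\mu^+$ and the induction hypothesis, and taking a supremum $k^*$ of witnesses $A_{k(\ell)}$ over the $\mu$ classes of $Q$, it shows the chain is $\equiv$-constant above $k^*$ --- a contradiction. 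You instead bound the number of classes directly by a complete invariant $(\mathrm{Cof}(A), [A_{\mathrm{low}}]_\equiv)$. The core mechanics coincide (Lemma~\ref{exceptional_set} plus a greedy transfinite injection in which $\kappa$-embeddability leaves room to dodge the $<\kappa$ already-used targets together with $B_{\mathrm{low}}$), but your version, if completed, buys an explicit classification of $\cP_\kappa(Q)/{\equiv}$, whereas the paper's buys brevity by outsourcing the extraction of comparable sets to EDM. Note also that your completeness step is easier than you fear: since $A_{\mathrm{low}} \equiv B_{\mathrm{low}}$ forces $|A_{\mathrm{low}}| = |B_{\mathrm{low}}|$, there is no overflow --- map low into low via the witnessing injection, then embed $A_{\mathrm{high}}$ into $B \setminus B_{\mathrm{low}}$ greedily, using that $[a] \in \mathrm{Cof}(A) = \mathrm{Cof}(B)$ supplies $\kappa$ candidates above each $a$.

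Two gaps remain. First, your count of the possible values of $\mathrm{Cof}(A)$ is not yet an argument: a priori a subset of $Q/{\equiv}$ has $2^\mu$ possibilities, which may exceed $\mu$. The ``further reduction'' you gesture at should be: $\mathrm{Cof}(A)$ is \emph{down-closed} in $Q/{\equiv}$ (anything with $\kappa$ elements of $A$ above it has $\kappa$ elements above everything below it), and in a well-quasi-order every upward-closed set is the up-closure of its minimal elements, which form a finite antichain by well-foundedness and the absence of infinite antichains; hence there are at most $|[Q/{\equiv}]^{<\omega}| = \mu$ up-sets, so at most $\mu$ down-sets. With this, plus the fact that $\kappa < \aleph_{\mu^+}$ bounds the number of cardinals $\lambda = |A_{\mathrm{low}}| < \kappa$ by $\mu$, your invariant takes at most $\mu$ values and the upper bound closes. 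Second, and more seriously, your lower-bound padding is internally inconsistent: if $\{q_\alpha \colon \alpha < \mu\}$ represents \emph{all} classes of $Q$, then every element of a nonempty $F \subseteq Q$ is equivalent to, hence above, some $q_\alpha$, so the required $F$ cannot exist. In fact the inequality $|\cP_\kappa(Q)|_\equiv \geq \mu$ is false in general: for $Q = (\omega, \leq)$ and $\kappa = \aleph_0$ we have $|Q|_\equiv = \aleph_0$, yet any two infinite subsets of $\omega$ are mutually embeddable, so $|\cP_{\aleph_0}(Q)|_\equiv = 1$. The paper's own proof establishes (and its applications use) only the upper bound $|\cP_\kappa(Q)|_\equiv \leq \mu$, so you should drop the lower bound and prove the statement as an inequality.
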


\begin{proof}
By induction on $\kappa$.
Suppose for a contradiction that $|\cP_\kappa(Q)|_\equiv\geq\mu^+$.
By the Erd\H{o}s-Dushnik-Miller theorem, every partial order $(P,\leq)$ contains an infinite antichain or a chain of size $|P|$, see \cite[Theorem~5.25]{dushnik1941partially}.
As $(Q,\leq_Q)$ is better-quasi-ordered, $(\cP_\kappa(Q), \leq)$ is well-quasi-ordered \cite[Corollary 28A]{nash1965well}. So the partial order $\cP_\kappa(Q)/\equiv$ contains no infinite antichains and thus contains a chain of size $\mu^+$. Since $\cP_\kappa(Q)/\equiv$ is well-founded, this chain is well-ordered. Hence, there is strictly increasing chain $\cA = (A_k\colon k < \mu^+)$ in $\cP_\kappa(Q)$. 

By applying Lemma \ref{exceptional_set} to each induced suborder $(A_k,\leq)$ of $(Q,\leq_Q)$, we obtain for every $A_k \in \cA$ a subset $X_k \subseteq A_k$ with $|X_k| < \kappa$ such that all elements of $A_k \setminus X_k$ are $\kappa$-embeddable in $(A_k,\leq)$.
Since $|X_k| < \kappa < \aleph_{\mu^+}$ for all $k < \mu^+$, there are at most $\mu$ different possible cardinalities for the sets $X_k$.
Since $\mu^+$ is regular, we may assume without loss of generality that $|X_k| = \nu$ for all $k < \mu^+$ and some cardinal $\nu < \kappa$.
Furthermore, we may assume that all sets $X_k$ for $k < \mu^+$ are pairwise equivalent with respect to $\equiv$ since $|\cP_\nu(Q)|_\equiv = \mu$ by induction.

Next, let $\set{q_\ell}:{\ell < \mu}$ be a representation system for the equivalence classes of $Q/{\equiv}$. 
For every $q_\ell$ that is $\kappa$-embeddable in some $A \in \cA$, we pick a suitable $A_{k(\ell)} \in \cA$ witnessing this. Let $k^* := \sup \set{k(\ell)}:{\ell < \mu}  < \mu^+$.
We arrive at the desired contradiction once we have proved that $A_k \equiv A_{k^*}$ for all $k > k^*$.
Since $X_{k} \equiv X_{{k^*}}$ already, it suffices to show that
$$A_k \setminus X_{k} \leq A_{k^*} \setminus X_{{k^*}}$$
for all $k > k^*$.
For this, we need an injective function $f: A_k \setminus X_{k} \to A_{k^*} \setminus X_{{k^*}}$ that satisfies $a \leq_Q f(a)$ for all $a \in A_k \setminus X_{k}$.
Enumerate $A_k \setminus X_{k} = \{a_i \colon i < \kappa\}$, let $i < \kappa$, and suppose that $f$ has been defined on $a_j$ for all $j < i$.
Since $a_i$ is $\kappa$-embeddable in $A_k$, it is also $\kappa$-embeddable in $A_{k'}$ for some $k' \leq k^*$ by the definition of $k^*$.
Since $A_{k'} \leq A_{k^*}$, the element $a_i$ is also $\kappa$-embeddable in $A_{k^*}$. Hence we can find an element $b \in A_{k^*} \setminus X_{{k^*}}$ such that $a_i \leq_Q b$ and $b$ is distinct from all values of $f$ that have already been defined.
We set $f(a_i) := b$, which completes the construction of $f$.
\end{proof}

\begin{cor}
\label{cor_sequences_inj}
Let $\mu$ be an infinite cardinal and $Q$ a better-quasi-ordered set with $|Q|_\equiv=\mu$.
Then $|\cP_{\leq\kappa}(Q)|_\equiv=\mu$ for all cardinals $\kappa < \aleph_{\mu^+}$.
\end{cor}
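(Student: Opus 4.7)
\medskip

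\noindent\textbf{Proof plan for Corollary~\ref{cor_sequences_inj}.} The plan is to deduce the corollary from Lemma~\ref{bqo_sequences_inj} by decomposing $\cP_{\leq\kappa}(Q)$ by cardinality. Write
\[
\cP_{\leq\kappa}(Q) \;=\; \bigcup_{\lambda \leq \kappa} \cP_\lambda(Q),
\]
the union ranging over all cardinals $\lambda \leq \kappa$. The lower bound $|\cP_{\leq\kappa}(Q)|_\equiv \geq \mu$ is immediate, since $\cP_{\leq\kappa}(Q)$ contains $\cP_{\aleph_0}(Q)$ (or even just $Q$ embedded as the singletons), which already has $\mu$ equivalence classes by Lemma~\ref{bqo_sequences_inj}. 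So the task reduces to proving the upper bound $|\cP_{\leq\kappa}(Q)|_\equiv \leq \mu$.

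For each infinite cardinal $\lambda \leq \kappa$, since $\lambda \leq \kappa < \aleph_{\mu^+}$, Lemma~\ref{bqo_sequences_inj} gives $|\cP_\lambda(Q)|_\equiv = \mu$. For the finite cardinalities, I would note that each finite subset of $Q$ is determined up to $\equiv$ by the finite multiset of $\equiv$-classes of its elements, so $|\cP_n(Q)|_\equiv \leq \mu^n = \mu$ for each $n < \omega$, and in total the finite sets contribute at most $\mu$ equivalence classes. Hence each summand contributes at most $\mu$ classes to $|\cP_{\leq\kappa}(Q)|_\equiv$.

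It remains to count how many cardinals $\lambda$ appear in the union. Since $\kappa < \aleph_{\mu^+}$, we may write $\kappa = \aleph_\beta$ for some ordinal $\beta < \mu^+$, so that $|\beta| \leq \mu$. Thus the number of infinite cardinals $\lambda \leq \kappa$ is at most $|\beta|+1 \leq \mu$, and together with the finite cardinals we still get at most $\mu$ indices. Combining: $|\cP_{\leq\kappa}(Q)|_\equiv \leq \mu \cdot \mu = \mu$, which completes the argument.

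The main (and only) obstacle worth flagging is making sure that the index set $\{\lambda : \lambda \leq \kappa\}$ does not blow up past $\mu$; this is precisely why the hypothesis $\kappa < \aleph_{\mu^+}$ (rather than $\kappa \leq \mu$ or similar) is exactly the right one, matching the hypothesis in Lemma~\ref{bqo_sequences_inj}. Everything else is bookkeeping on top of that lemma.
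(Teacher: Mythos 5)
Your proof is correct and takes essentially the same route as the paper: decompose $\cP_{\leq\kappa}(Q)$ into the layers $\cP_\lambda(Q)$ for cardinals $\lambda \leq \kappa$, apply Lemma~\ref{bqo_sequences_inj} to each layer, and use $\kappa < \aleph_{\mu^+}$ to bound the number of cardinals $\leq \kappa$ by $\mu$, giving $\mu \cdot \mu = \mu$. Your separate multiset argument for the finite layers and your explicit check of the lower bound are harmless (indeed careful) additions; the paper simply applies the lemma to all $\nu \leq \kappa$ at once and leaves the lower bound implicit.
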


\begin{proof}
Since $\kappa < \aleph_{\mu^+}$, there exist at most $\mu$ cardinals $\leq \kappa$.
Hence $|\cP_{\leq\kappa}(Q)|_\equiv \leq \mu * \mu = \mu$ by Theorem \ref{bqo_sequences_inj} applied to $\cP_\nu(Q)$ for all cardinals $\nu \leq \kappa$.
\end{proof}

\section{The upper bound}

We consider rooted, graph theoretic trees and tree-order preserving topological minors. For this, we introduce a minimal amount of notation, cf.~\cite[\S12.2]{diestel2015book}.
Recall that fixing a root $r$ of a graph-theoretic tree $T$ yields a natural tree-order $\leq_r$ on $T$ where $t \leq_r s$ if $t$ lies on the unique path from $r$ to $s$ in $T$. Given a rooted tree, write $\upcl{t}$ for the subtree of $T$ induced by the set $\set{t' \in T}:{t\le_r t'}$ with root $t$. The neighbours of $t$ in $\upcl{t}$ are the \emph{successors} of $t$, denoted by the set $\successor(t)$.
Given rooted trees $T$ and $S$, we write $T \leq S$ if there exists a topological minor embedding $\varphi \colon T \to S$ that preserves the tree-order: If $x \leq y$ in $T$ then $\varphi(x) \leq \varphi(y)$ in $S$.

We now introduce a new rank function inspired by the proof methods of the better-quasi-ordering of trees due to Nash-Williams:

\begin{defn}
We say that a tree $T$ has rank 0 if $\upcl{t} \equiv T$ holds for all $t \in T$.
Given an ordinal $\alpha > 0$, we assign rank $\alpha$ to $T$ if $T$ does not already have a rank $<\alpha$ and for all $t \in T$, we have either $\upcl{t} \equiv T$ or $\upcl{t}$ has rank $<\alpha$. We also write $\rank(T)$ for the rank of $T$.
\end{defn}

\begin{lemma}\label{lem_trees_have_ranks}
Every tree of size at most $\kappa$ has a rank $<\kappa^+$.
\end{lemma}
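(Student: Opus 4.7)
The plan is to argue by contradiction: suppose some tree $T$ of size $\leq \kappa$ fails to have rank $<\kappa^+$, i.e., either $T$ has no rank at all, or $\rank(T) \geq \kappa^+$. I would first prove the following \emph{descent step}: any such exceptional tree $T'$ contains a non-root vertex $t$ with $\upcl{t} \not\equiv T'$ such that $\upcl{t}$ is itself exceptional in the same sense. Indeed, if every $t \in T'$ with $\upcl{t} \not\equiv T'$ had $\rank(\upcl{t}) < \kappa^+$, then $|T'| \leq \kappa$ together with the regularity of $\kappa^+$ would give
\[
\alpha := \sup \set{\rank(\upcl{t}) + 1}:{t \in T',\ \upcl{t} \not\equiv T'} \;<\; \kappa^+,
\]
and the definition of rank would force $\rank(T') \leq \alpha$, contradicting the hypothesis on $T'$.

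Iterating the descent step starting from $T$, I would construct an infinite strictly ascending chain $t_0 < t_1 < t_2 < \cdots$ in the tree order of $T$ such that each subtree $T_i := \upcl{t_i}$ is exceptional and $T_{i+1} \not\equiv T_i$ for every $i \in \NN$.

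To close the argument, I would invoke the Nash-Williams blackbox recorded at the end of Section~\ref{sec_bqo}: rooted trees are (better- and hence) well-quasi-ordered under the tree-order preserving topological minor relation, so the sequence $(T_i)_{i \in \NN}$ contains indices $i<j$ with $T_i \leq T_j$. Conversely, since $T_j \subseteq T_i$ as subgraphs and the orders $\leq_{t_i}$ and $\leq_{t_j}$ agree on $T_j$, the identity inclusion of $T_j$ into $T_i$ is automatically a tree-order-preserving subgraph embedding, so $T_j \leq T_i$ for free. Therefore $T_i \equiv T_j$, and combined with $T_j \leq T_{i+1} \leq T_i$ this yields the forbidden equivalence $T_i \equiv T_{i+1}$.

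The one technical point is the supremum step of the descent: this is where the size hypothesis $|T| \leq \kappa$ enters the argument, via the regularity of $\kappa^+$. The rest is a clean application of the well-quasi-ordering blackbox, together with the elementary observation that a subtree of a rooted tree re-embeds into the ambient tree via the identity in a tree-order-preserving way even when their roots differ.
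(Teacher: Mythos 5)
Your proof is correct and is essentially the paper's argument recast as infinite descent: the paper applies the very same supremum computation (at most $\kappa$ subtree ranks below $\kappa^+$, hence a bound below $\kappa^+$ by regularity) to a $\leq$-minimal tree without rank $<\kappa^+$, whose existence follows from Nash-Williams's well-quasi-ordering theorem, while you iterate that computation inside the fixed tree and contradict the same wqo theorem with the resulting strictly $\leq$-decreasing chain $T_0, T_1, T_2, \dots$. The two formulations are interchangeable uses of well-foundedness, and your explicit observation that the inclusion $\upcl{t} \subseteq T$ is a tree-order-preserving embedding is exactly the fact the paper uses implicitly when it invokes minimality to conclude $\rank(\upcl{t}) < \kappa^+$ for all $t$ with $\upcl{t} \not\equiv T$.
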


\begin{proof}
Suppose for a contradiction that there is a tree in $\cT$ which does not have a rank $<\kappa^+$. Since rooted trees are well-quasi-ordered under $\leq$ by Nash-Williams's theorem \cite{nash1965well}, there exists a $\leq$-minimal such tree $T$.
Then for every $t \in T$ with $\upcl{t} \not \equiv T$ we have $\rank (\upcl{t}) <\kappa^+$ by minimality of $T$.
However, the rank of $T$ is at most
$$\sup \{\rank(\upcl{t})\colon t \in T, \upcl{t} \not\equiv T\} + 1,$$
which is an ordinal $<\kappa^+$ since $|T| \leq \kappa$.
This contradicts the choice of $T$.
\end{proof}

For the remainder of this section, let $\kappa$ be a fixed infinite cardinal. We write $\cT$ for the class of rooted trees of size at most $\kappa$, and $\cC$ for the set of cardinals of size at most $\kappa$. 

Let $T$ be a tree in $\cT$. For all $t \in T$, let
$$\Gamma(t) := \p{|\{s \in \successor(t)\colon \upcl{s} \equiv T\}|,
\{\upcl{s}\colon s \in \successor(t), \upcl{s} \not\equiv T\}\}} \in \cC \times \cP_{\leq\kappa}(\cT).$$
Furthermore, we define
$$\Theta(T) := \{\Gamma(t) \colon t \in T\} \in \cP_{\leq\kappa}(\cC \times \cP_{\leq\kappa}(\cT)).$$

Given two quasi-orderings $(Q,\leq)$ and $(R,\leq)$, we define a quasi-ordering on $Q \times R$ by letting $(q,r) \leq (q',r')$ if $q \leq q'$ and $r \leq r'$.
Together with the quasi-ordering on $\cP(Q)$ defined in Section \ref{sec_bqo}, this yields a quasi-ordering on the set $\cP_{\leq\kappa}(\cC \times \cP_{\leq\kappa}(\cT))$ considered in the definition of $\Theta(T)$ above.
Nash-Williams showed in \cite[Lemma 29]{nash1965well}:

\begin{lemma}
\label{lem_nash-williams}
For all rooted trees with $\Theta(T) \leq \Theta(S)$, we have $T\leq S$. \hfill \qed
\end{lemma}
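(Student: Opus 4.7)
The plan is to construct the required order-preserving topological minor embedding $\varphi : T \to S$ by recursion, driven by an injection $g : \Theta(T) \to \Theta(S)$ witnessing the hypothesis. A useful preliminary observation is that $\Theta$ is an $\equiv$-invariant, and in particular $\Theta(\upcl{t}) = \Theta(T)$ whenever $\upcl{t} \equiv T$, since the partition of $\successor(t)$ into ``$\equiv T$'' and ``not $\equiv T$'' then coincides with the partition into ``$\equiv \upcl{t}$'' and ``not $\equiv \upcl{t}$''. This invariance is what will let the recursion reapply the hypothesis on self-similar branches without loss.

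Concretely, I would argue by contradiction. Since rooted trees are well-quasi-ordered under $\leq$ by Nash-Williams, there exists a $\leq$-minimal tree $T$ for which some $S$ witnesses the failure. Using $g$, fix $v_0 \in S$ with $\Gamma(r_T) \leq \Gamma(v_0)$ and split $\successor(r_T)$ into the self-similar part $A = \{s : \upcl{s} \equiv T\}$ and its complement $B$; likewise split $\successor(v_0)$ into $A'$ and $B'$. The comparison of the second coordinates of $\Gamma(r_T)$ and $\Gamma(v_0)$ in $\cP_{\leq \kappa}(\cT)$ supplies an injection $s \mapsto s'$ from $B$ into $B'$ with $\upcl{s} \leq \upcl{s'}$; since each such $\upcl{s}$ is strictly smaller than $T$ with respect to $\leq$, minimality lets us embed $\upcl{s}$ into $\upcl{s'}$ order-preservingly. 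The comparison of the first coordinates yields $|A| \leq |A'|$, which provides an injection from $A$ into $A'$ that has to be filled in with actual embeddings of trees $\equiv T$ into trees $\equiv S$.

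The main obstacle is precisely this self-similar case: each $s \in A$ has $\Theta(\upcl{s}) = \Theta(T) \leq \Theta(S) = \Theta(\upcl{s'})$, but $\upcl{s}$ is not strictly smaller than $T$, so minimality of $T$ does not apply. The remedy, which is the heart of Nash-Williams's argument, is to set up a simultaneous construction on pairs $(T, S)$ with $\Theta(T) \leq \Theta(S)$: one picks a single candidate embedding at the top level and re-uses it on every self-similar branch, exploiting the cardinality bound $|A| \leq |A'|$ to maintain injectivity and disjointness of images. Formally, this is cleanest via a minimal-bad-sequence argument on the class of such failing pairs, ordered by embeddability of the first coordinate, turning the apparent circularity of the recursion into a contradiction. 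Careful book-keeping to ensure that the embeddings of distinct branches use pairwise disjoint vertices of $S$ — routinely handled by passing to disjoint subtrees at each level — then completes the construction and yields the claimed embedding $\varphi : T \hookrightarrow S$.
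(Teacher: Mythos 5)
The paper itself gives no proof of this lemma --- it is quoted verbatim from Nash-Williams \cite[Lemma~29]{nash1965well} as a black box --- so your attempt has to stand on its own, and it contains a genuine gap at exactly the point you identify as ``the heart'' of the argument. First, your preliminary claim that $\Theta(\upcl{t}) = \Theta(T)$ whenever $\upcl{t} \equiv T$ is false: the partition argument only shows $\Gamma_{\upcl{t}}(u) = \Gamma_T(u)$ for $u \in \upcl{t}$, hence $\Theta(\upcl{t}) \subseteq \Theta(T)$, and the inclusion can be strict. Take $S$ with root $r$ and two children, one carrying a copy of the full binary tree $B$ and one a leaf $L$; then $B \equiv S$, but $\Theta(B) = \{(2,\emptyset)\}$ while $\Theta(S) = \{(1,\{L\}), (2,\emptyset), (0,\emptyset)\}$. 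Worse, the relation your recursion actually needs in the self-similar case, $\Theta(\upcl{s}) \leq \Theta(\upcl{s'})$, can fail at the very first recursive call even when hypothesis and conclusion of the lemma both hold: let $T'$ be the ray with one pendant leaf at every vertex, so $\Theta(T') = \{(1,\{L\}), (0,\emptyset)\} \leq \Theta(S)$. The only vertex of $S$ whose $\Gamma$-value dominates $(1,\{L\})$ is $r$, its unique self-similar branch is $B$, and $\Theta(T') \not\leq \Theta(B)$ already for cardinality reasons. So the invariant ``each vertex maps to a vertex with dominating $\Gamma$-value'' is unachievable by \emph{any} embedding $T' \to S$ (which exists: route the leaves into binary branches), and the recursion as you set it up collapses.

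Your closing gesture at a minimal-bad-sequence argument does not repair this: if $(T,S)$ is a bad pair minimal in the embeddability order on first coordinates, the problematic sub-pairs $(\upcl{s}, \upcl{s'})$ satisfy $\upcl{s} \equiv T$, i.e.\ they are equivalent to, not strictly below, the minimal pair, so minimality yields no contradiction --- this restates the circularity you noticed rather than resolving it. (Also, minimality is never needed for the branches in $B$: the second coordinate of $\Gamma$ is ordered by the embedding relation on $\cT$ itself, so $\upcl{s} \leq \upcl{s'}$ \emph{is} already an embedding.) The missing device, which is how Nash-Williams's proof of Lemma~29 actually works, is a redirection: when recursing into a matched self-similar pair $(s,s')$, do not seek a dominating vertex inside $\upcl{s'}$; instead pick \emph{any} $w \in S$ realizing $g(\Gamma_T(s))$, fix an embedding $\alpha \colon S \to \upcl{s'}$ witnessing $\upcl{s'} \equiv S$, and map $s \mapsto \alpha(w)$. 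The non-self-similar branches of $s$ embed into branches of $w$ by $\Gamma_T(s) \leq \Gamma_S(w)$ and are then pushed into pairwise distinct branches above $\alpha(w)$ by composing with $\alpha$; for each self-similar child $c$ of $s$, matched to a self-similar child $c'$ of $w$, one recurses with the composed embedding $\alpha \circ \alpha_{c'}$ as the new datum, where $\alpha_{c'} \colon S \to \upcl{c'}$ witnesses $\upcl{c'} \equiv S$. Since every vertex of a rooted graph-theoretic tree has finite depth, this recursion is well founded, and the internal disjointness of $\alpha$ keeps all images disjoint. Your proposal contains none of this, and without it the self-similar case --- the entire content of the lemma --- remains unproved.
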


Finally, we give the argument for the upper bound in our main theorem, in a stronger version for rooted trees:

\begin{thm}
For any infinite cardinal $\kappa$ there are at most $\kappa^+$ distinct topological types of rooted trees of size $\kappa$. 
\end{thm}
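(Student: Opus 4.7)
The plan is to prove by transfinite induction on $\alpha < \kappa^+$ that the subclass $\cT_\alpha \subseteq \cT$ of rooted trees of rank exactly $\alpha$ has at most $\kappa$ many $\equiv$-classes. Since every tree in $\cT$ has a rank $<\kappa^+$ by Lemma~\ref{lem_trees_have_ranks}, $\cT$ decomposes as a union of $\kappa^+$ many such subclasses, yielding $|\cT|_\equiv \leq \kappa \cdot \kappa^+ = \kappa^+$ as required.

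For the inductive step at $\alpha$, assume $|\cT_\beta|_\equiv \leq \kappa$ for every $\beta < \alpha$; since $|\alpha| \leq \kappa$ this gives $|\cT_{<\alpha}|_\equiv \leq \kappa$, where $\cT_{<\alpha} = \bigcup_{\beta < \alpha} \cT_\beta$. The key observation is that for any tree $T$ of rank exactly $\alpha$ and any vertex $t \in T$, every successor $s$ of $t$ satisfies either $\upcl{s} \equiv T$ or $\upcl{s}$ has rank $<\alpha$, directly from the rank definition. Hence each $\Gamma(t)$ lies in $\cC \times \cP_{\leq\kappa}(\cT_{<\alpha})$, and $\Theta(T)$ lies in $\cP_{\leq\kappa}(Q_\alpha)$ where $Q_\alpha := \cC \times \cP_{\leq\kappa}(\cT_{<\alpha})$. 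By Lemma~\ref{lem_nash-williams}, the assignment $T \mapsto \Theta(T)$ descends to an injection from $\cT_\alpha/{\equiv}$ into $\cP_{\leq\kappa}(Q_\alpha)/{\equiv}$, so it suffices to show $|\cP_{\leq\kappa}(Q_\alpha)|_\equiv \leq \kappa$.

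I would obtain this bound by two applications of Corollary~\ref{cor_sequences_inj}: first applied to $\cT_{<\alpha}$ (which is better-quasi-ordered as a subclass of $\cT$) to control $|\cP_{\leq\kappa}(\cT_{<\alpha})|_\equiv$, and then to $Q_\alpha$, noting that $\cC$ is a well-order with at most $\kappa$ many $\equiv$-classes and that products and power sets of BQOs are BQO. The main obstacle is that Corollary~\ref{cor_sequences_inj} only applies when $\kappa < \aleph_{\mu^+}$ for $\mu = |Q|_\equiv$, which may fail in the early stages of the induction when $\mu$ is much smaller than $\kappa$. I would resolve this by padding: disjointly adjoin to $Q$ a well-order of length $\kappa$ with no relations to $Q$, producing a BQO $Q'$ with $|Q'|_\equiv = \kappa$, to which the corollary applies since $\kappa < \aleph_{\kappa^+}$ always holds. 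By the incomparability, any embedding in $Q'$ between subsets of $Q$ must stay inside $Q$, so the natural map $\cP_{\leq\kappa}(Q)/{\equiv} \hookrightarrow \cP_{\leq\kappa}(Q')/{\equiv}$ is injective, yielding $|\cP_{\leq\kappa}(Q)|_\equiv \leq \kappa$. Applied twice in succession to $Q = \cT_{<\alpha}$ and then to $Q = Q_\alpha$, this closes the induction.
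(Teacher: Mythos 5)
Your proposal is correct and takes essentially the same route as the paper: induction on the rank supplied by Lemma~\ref{lem_trees_have_ranks}, the map $T \mapsto \Theta(T)$ combined with Lemma~\ref{lem_nash-williams} to bound $|\cT_\alpha|_\equiv$ by $|\cP_{\leq\kappa}(\cC \times \cP_{\leq\kappa}(\cT_{<\alpha}))|_\equiv$, and two applications of Corollary~\ref{cor_sequences_inj} (together with closure of better-quasi-orders under products and $\cP_{\leq\kappa}$). Your padding device is a valid and slightly more careful way to invoke Corollary~\ref{cor_sequences_inj} when $|\cT_{<\alpha}|_\equiv < \kappa$; the paper applies the corollary directly, implicitly using that its proof also establishes the monotone version with $|Q|_\equiv \leq \mu$ in place of $|Q|_\equiv = \mu$.
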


\begin{proof}
For all ordinals $\alpha < \kappa^+$, we write $\cT_\alpha$ for the class of all trees of size at most $\kappa$ and rank $\alpha$ and $\cT_{<\alpha}$ for the class of all trees of size at most $\kappa$ and rank $<\alpha$.
We show by induction on $\alpha$ that $|\cT_\alpha|_\equiv \leq \kappa$ holds for all $\alpha < \kappa^+$.
Then it follows from Lemma \ref{lem_trees_have_ranks} that
$$|\cT|_\equiv = \left\vert\bigcup_{\alpha<\kappa^+}\cT_\alpha \right\vert_\equiv\leq\kappa^+,$$
completing the proof.

Let $\alpha < \kappa^+$ and suppose $|\cT_\beta|_\equiv \leq \kappa$ for all $\beta < \alpha$.
Consider the function
$$\cT_\alpha \to \cP_{\leq\kappa}(\cC \times \cP_{\leq\kappa}(\cT_{<\alpha})),\, T \mapsto \Theta(T).$$
If $T, S \in \cT_\alpha$ belong to different twin classes, then also $\Theta(T)$ and $\Theta(S)$ belong to different twin classes of $\cP_{\leq\kappa}(\cC \times \cP_{\leq\kappa}(\cT_{<\alpha}))$ by Lemma \ref{lem_nash-williams}.
We conclude that
$$|\cT_\alpha|_\equiv \leq |\cP_{\leq\kappa}(\cC \times \cP_{\leq\kappa}(\cT_{<\alpha}))|_\equiv.$$
Thus it suffices to show
$$
|\cP_{\leq\kappa}(\cC \times \cP_{\leq\kappa}(\cT_{<\alpha}))|_\equiv \leq \kappa.    
$$
First, we argue that $|\cT_{<\alpha}|_\equiv \leq \kappa$: This is clear if $\alpha = 0$.
If $\alpha > 0$, we have $|\cT_\beta|_\equiv \leq \kappa$ for all $\beta < \alpha$ and hence $|\cT_{<\alpha}|_\equiv = |\bigcup_{\beta < \alpha}\cT_\beta|_\equiv \leq \kappa$ since $\alpha < \kappa^+$.
Next, $\cT$ and therefore $\cT_{<\alpha}$ is better-quasi-ordered by \cite{nash1965well} and thus we have
$|\cP_{\leq\kappa}(\cT_{<\alpha})|_\equiv \leq \kappa$ by Corollary \ref{cor_sequences_inj}.
Then it follows from cardinal arithmetic that also $|\cC \times \cP_{\leq\kappa}(\cT_{<\alpha})|_\equiv \leq \kappa$.
Finally, since $\cP_{\leq\kappa}(\cT_{<\alpha})$ is better-quasi-ordered by \cite[Corollary 28A]{nash1965well} and hence $\cC \times \cP_{\leq\kappa}(\cT_{<\alpha})$ is better-quasi-ordered by \cite[Corollary 22A]{nash1965well}, applying Corollary \ref{cor_sequences_inj} once more yields  
$|\cP_{\leq\kappa}(\cC \times \cP_{\leq\kappa}(\cT_{<\alpha}))|_\equiv \leq \kappa$.
\end{proof}

\bibliographystyle{plain}
\bibliography{ref}

\end{document}